\documentclass[a4paper,11.5pt,reqno]{amsart}

\usepackage{graphicx}
\usepackage[utf8]{inputenc}
\usepackage{amsmath,amssymb,amsthm}
\usepackage{xcolor}
\usepackage{hyperref}

% just for the review
\usepackage{lineno}
\modulolinenumbers[3]
%% \linenumbers

% theorem like environments using the AMS package "amsthm"
\theoremstyle{plain}
\newtheorem{theorem}{Theorem}[section]

\theoremstyle{definition}
\newtheorem{definition}{Definition}

\theoremstyle{remark}
\newtheorem{remark}{Remark}

\newcommand{\R}{\mathbb{R}}
\newcommand{\Z}{\mathbb{Z}}

\numberwithin{equation}{section}

\hypersetup{colorlinks = true, linkcolor = blue, urlcolor = blue, citecolor = blue}

\begin{document}

\title[Families of lattices] 
      {Families of lattices with an unbounded number of unit vectors}

\author[H. Ruhland]{Helmut Ruhland}
\address{Santa F\'{e}, La Habana, Cuba}
\email{helmut.ruhland50@web.de}

\subjclass[2020]{Primary 52C10}

\keywords{Discrete Geometry, Unit distance graphs, Moser lattice}

\begin{abstract}
3 families of 4-dimensional lattices $L_k, M_k, M_k / 2 \subset \mathbb{R}^2$ are defined. Each lattice is defined by 2 quadratic extensions and has a \emph{finite} number of unit vectors, but the number of unit vectors in each of the 3 familes is \emph{unbounded}. $L_3$ is the Moser lattice.
\end{abstract}

\date{\today}

\maketitle

\section{Introduction}

In \cite{Rad} D. Radchenko gave an affirmative answer to the question posed by \cite[p.~186]{BraMoPa}: \emph{Does there exist a finitely generated additive subgroup $\mathcal{A} \subset \R^2$ such that there are infinitely many elements of $\mathcal{A}$ lying on the unit circle?} \\
In the same article it is also shown, a lattice with an infinite number of unit vectors has dimension $\ge 4$. \\

In this article we define three families $L_k, M_k, M_k / 2 \subset \R^2$ of 4-dimensional lattices obtained by $2$ quadratic extensions. The Moser lattice is $L_3$. Though none of the family members has an infinite number of unit vectors as in the lattices above, the number of unit vectors in the family is \emph{unbounded}.

\section{A family of 4-dimensional lattices with hexagonal sub-lattices \\ an an unbounded number of unit vectors}

\begin{definition}
\label{Def_L_k}
Let $\omega_k = \frac{(2 \, k - 1) + i \, \sqrt{4 \, k - 1}} {2 \, k} =
                e ^ {i \, \arccos (1 - 1 / (2 \, k))}$, 
$\omega_1 = \frac{1 + i \, \sqrt{3}} {2}$ is a $6^{th}$ root of unity, i.e. hexagonal sub-lattices. Define the following 4-dimensional lattice depending on the integer parameter $k$:
\begin{equation}
   \L_k = \{a \cdot 1 + b \cdot \omega_1 + c \cdot \omega_k + d \cdot \omega_1 \, \omega_k
                    \, \vert \, a, b, c, d \in \Z  \}   
   \label{L_k}
\end{equation}
The values $k = 1, 7, 19, 37, 61, 91, \dots$ have to be excluded. These are the values $(3 \, s^2 - 1) / 4$ for odd $s$. They have to be excluded, because in these cases  $4 \, k - 1 = 3 \, s^2$ and the lattice is only 2-dimensional. $L_3$ is the Moser lattice with $18$ unit vectors, see e.g. \cite{EnHaSuVaZs}.
\end{definition}

\begin{theorem}
Let $H (a, b) = a^2 + a b + b^2$ be the squared length of the vector $\vert \, a \cdot 1 + b \cdot \omega_1 \, \vert$ in a hexagonal lattice. Then for all integers $a, b$ with and $H (a, b) = k$, $(a, b, -a, -b)$ is a unit vector in $L_k$. The other $12$ unit vectors $(a, b, 0, 0)$ and $(0, 0, c, d)$ are defined by $H (a, b) = H (c, d) = 1$. \\
The number of solutions $a, b$ of $H (a, b) = k$, i.e. the number of unit vectors $- \, 12$ in the family $L_k$, is unbounded.
\end{theorem}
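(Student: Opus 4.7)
The plan is to reduce everything to classical arithmetic of the Eisenstein integers $\Z[\omega_1]$, the ring of integers of $\mathbb{Q}(\sqrt{-3})$. First I would verify the two identifications of unit vectors by direct calculation: the lattice element corresponding to $(a,b,-a,-b)$ factors as $(a + b\,\omega_1)(1 - \omega_k)$, so its squared modulus is $|a + b\,\omega_1|^2 \cdot |1 - \omega_k|^2 = H(a,b)\,(2 - 2\,\mathrm{Re}(\omega_k)) = H(a,b)/k$, which equals $1$ exactly when $H(a,b) = k$. For the axial vectors, $(a,b,0,0)$ has squared length $H(a,b)$ and $(0,0,c,d)$ has squared length $H(c,d)$ (since $|\omega_k| = 1$), and $H = 1$ has the six classical solutions corresponding to the six units of $\Z[\omega_1]$, giving $6 + 6 = 12$ vectors.

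For the unboundedness statement itself, the key tool is the classical formula $r(n) := \#\{(a,b) \in \Z^2 : H(a,b) = n\} = 6 \cdot (\text{number of ideals of } \Z[\omega_1] \text{ of norm } n)$, which is valid because $\Z[\omega_1]$ is a PID with six units. Rational primes $p \equiv 1 \pmod 3$ split as $p = \pi\bar\pi$ in $\Z[\omega_1]$, so any product $n = p_1 p_2 \cdots p_m$ of $m$ distinct such primes satisfies $r(n) = 6 \cdot 2^m$. By Dirichlet there are infinitely many primes $\equiv 1 \pmod 3$, so $r$ takes arbitrarily large values on such $n$.

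The final issue is to choose an $n$ of this shape that is also admissible, i.e.\ not of the excluded form $(3s^2+1)/4$. The excluded integers form a sparse set of size $O(\sqrt{N})$ up to height $N$, whereas the $m$-fold squarefree products of primes $\equiv 1 \pmod 3$ up to $N$ are far more numerous, so an admissible choice of $k$ with $r(k) = 6 \cdot 2^m$ is always available; this will complete the proof of the unboundedness claim.

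The step I expect to require the most care is the ``other $12$'' assertion, namely that the three named families exhaust \emph{all} unit vectors of $L_k$. Writing a generic element as $z + w\,\omega_k$ with $z, w \in \Z[\omega_1]$ and expanding $|z + w\,\omega_k|^2 = 1$, one separates the rational and irrational (involving $\sqrt{4k-1}$) parts; the non-excluded hypothesis forces $z\bar w \in \Z$, hence $z$ and $w$ to be $\mathbb{Q}$-proportional, after which a short case analysis of the resulting quadratic in the proportionality ratio isolates exactly the three configurations $w = 0$, $z = 0$, and $z + w = 0$.
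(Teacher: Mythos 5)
Your proposal is correct, and its core --- the factorization $(a,b,-a,-b) = (a + b\,\omega_1)(1-\omega_k)$ together with $|1-\omega_k|^2 = 1/k$, and the count $6\cdot 2^m$ for $k$ a product of $m$ distinct primes $\equiv 1 \pmod 3$ --- is exactly the paper's argument; the paper quotes Hirschhorn's divisor formula $u(k) = 6\,(d_{3,1}(k) - d_{3,2}(k))$ where you count split ideals in $\Z[\omega_1]$, and these are the same theorem. Where you genuinely go beyond the paper is in two places. First, you notice that the constructed $k$ must also avoid the excluded values $4k-1 = 3s^2$; this is a real issue ($k=7$ and $k=91$ are products of distinct primes $\equiv 1 \pmod 3$ and are both excluded), and your sparsity comparison --- $O(\sqrt N)$ excluded values against $\gg N/\log N$ admissible products up to $N$ --- closes a gap the paper does not address. (A slicker fix: take $3\prod p_i$ instead of $\prod p_i$; then $4k-1 \equiv 2 \pmod 3$ cannot equal $3s^2$, and $u(3n) = u(n)$.) Second, you sketch a proof that the listed vectors are \emph{all} the unit vectors: writing a lattice element as $z + w\,\omega_k$ with $z,w \in \Z[\omega_1]$, the coefficient of $\sqrt{3(4k-1)}$ in $|z+w\,\omega_k|^2$ must vanish since $3(4k-1)$ is not a perfect square for admissible $k$, forcing $z\bar w \in \Z$; the resulting quadratic in the proportionality ratio then admits only $z=0$, $w=0$, or $z=-w$ with $|w|^2 = k$, as you say. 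The paper asserts this classification in the theorem statement but never proves it. So your route coincides with the paper's where the paper gives a proof, and supplies the two pieces the paper omits.
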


\begin{proof}
$(a, b, -a, -b) = (a \cdot 1 + b \cdot \omega_1) \, (1 - \omega_k)$, because $\vert \, 1 - \omega_k \, \vert = 1 / \sqrt{k}$, \\ $\vert \, (a, b, -a, -b) \, \vert = 1$ and is a unit vector. \\
Unboundedness: see e.g. \cite{Hir} formula (3), the number of solutions of $H (a, b) = k$ is denoted by $u (k)$ and $u (k) = 6 \, \left(d_{3,1} (k) - d_{3,2} (k)\right)$. Let $k = \prod_{i = 1}^{m} p_i$ be the product of $m$ primes with $p_i \equiv 1 \mod 3$. Then all $2^m$ divisors of $k$ are $\equiv 1 \mod 3$ and $u (k) = 6 \, 2^m$ is unbounded.
\end{proof}

More generally, with the prime decomposition of $k$, the primes $p_i \equiv 1 \mod 3, q_i \equiv 2 \mod 3$ and $u (k) \ne 0$:
\begin{equation}
   k = 3^t \, \prod_{i = 1}^{m} p_i^{e_i} \, \prod_{i = 1}^{n} q_i^{2f_i} \qquad
   u (k) = 6 \, \prod_{i = 1}^{m} (e_i + 1)
   \label{Gen_u_k}
\end{equation}
Note that $u (k) = 0$ when one of the exponents of the primes $q_i$ is odd. $u (k)$ is closely related to the factorization of $k$, not only in $\Z$, but in the ring of Eisenstein integers $\Z[\omega_1] = \Z[(1 + i \, \sqrt{3}) / 2]$. \\

One can find in N.J.A. Sloane's OEIS \href{https://oeis.org/A004016} {A004016} and \href{https://oeis.org/A004016/b004016.txt} {Table for n = 0..10000} the number of vectors in a shell of the hexagonal lattice.  
The lowest $k$ for a given number of solutions (divided by $6$) of $H (a, b) = k$ can be found in  \href{https://oeis.org/A343771} {A343771} and \href{https://oeis.org/A343771/b343771.txt} {Table for n = 0..1000}. The table \ref{Tab_L_k_lowest} is created using OEIS A343771.\\

\begin{table}[h]
  \caption{Lowest integer $k$ for a given number of unit vectors, obtained from
           \href{https://oeis.org/A343771/b343771.txt} {A343771 Table}, \\
           \# OEIS table entry = (\# unit vectors - 12) / 6}
  \begin{tabular}{c || c | c | c | c | c | c | c | c | c | c |}
  \hline
  \# unit vectors   & 18 & 24$^*$ & 30 &  36$^*$ &   42 &  48 &     56 &   60 & \dots &          300 \\
  \hline
  $k$               &  3 & 13     & 49 & 133     & 2\,401 & 637 & 117\,649 & 1\,729 & \dots & 13\,882\,141 \\
  \hline
  \end{tabular}
  \label{Tab_L_k_lowest}
\end{table}
{\footnotesize $ ^*$ In the OEIS table the lowest $k$ for the \# unit vectors  $= 24, 36$, i.e. for the entries $2, 4$ in OEIS, are $7, 91$. But these $L_k$ build only 2-dimensional lattices. So the next lowest $k$ with the same shell size is taken, that builds a 4-dimensional lattice.}

\subsection{Half integer subscripts for $L_k$}

Let $k$ be the half of an odd integer, i.e. $k = (2 m + 1) / 2$. In this case the conditions for non-trival init vectors for the integers $a, b$ reads as $H (a, b) = a^2 + a b + b^2 = (2 m + 1) /2$. There are no solutions. If we take the half of the four bases of $L_k$, naming the new lattice as $L_k / 2$ the
condition is $H (a / 2, b / 2) = (2 m + 1) / 2$ or $H (a, b) = 2 \, (2 m + 1)$. Because of the odd power of the factor $2$, see \ref{Gen_u_k}, $2$ is a prime $q_i \equiv 2 \mod 3$,  there exist no solutions. So these $L_k$ have only the trivial $12$ unit vectors and are not regarded as members of a own family.

\section{Another family $M_k$ of 4-dimensional lattices with square  \\ sub-lattices and an unbounded number of unit vectors}

\begin{definition}
Replace the $\omega_1$ in all $L_k$, see definition \ref{Def_L_k}, by $\omega_{1/2} = i$, i.e. square sub-lattices. Then we get a new family of lattices:
\begin{equation}
   M_k = \{a \cdot 1 + b \cdot \omega_{1/2} + c \cdot \omega_k + d \cdot \omega_{1/2} \, \omega_k
                    \, \vert \, a, b, c, d \in \Z  \}   
   \label{M_k}
\end{equation}
No integer values of $k$ have to be excluded. In the cases  $4 \, k - 1 = s^2$ the lattice wold be only 2-dimensional, but $-1$ is no quadratic residue modulo $4$.
\end{definition}

\begin{theorem}
Let $S (a, b) = a^2 + b^2$ be the squared length of the vector $\vert \, a \cdot 1 + b \cdot \omega_{1/2} \, \vert$ in a square lattice. Then for all integers $a, b$ with and $S (a, b) = k$, $(a, b, -a, -b)$ is a unit vector in $M_k$. The other $8$ unit vectors $(a, b, 0, 0)$ and $(0, 0, c, d)$ are defined by $S (a, b) = S (c, d) = 1$. \\
The number of solutions $a, b$ of $S (a, b) = k$, i.e. the number of unit vectors $- \, 8$ in the family $M_k$, is unbounded.
\end{theorem}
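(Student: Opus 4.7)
The plan is to mirror the three-step template used for $L_k$, with the square-lattice generator $\omega_{1/2} = i$ replacing the hexagonal generator $\omega_1$, and the quadratic form $S(a,b) = a^2 + b^2$ replacing $H(a,b) = a^2 + ab + b^2$.

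First, I would observe the algebraic identity $(a + b \, i)(1 - \omega_k) = a \cdot 1 + b \cdot i - a \cdot \omega_k - b \cdot i \, \omega_k$, which exhibits the vector with coordinates $(a, b, -a, -b)$ as a complex product in $M_k$. Since $\omega_k = \frac{(2k-1) + i \sqrt{4k-1}}{2k}$, a direct computation gives $|1 - \omega_k|^2 = \frac{1 + (4k-1)}{4k^2} = \frac{1}{k}$, exactly as in the hexagonal case, because this norm computation is independent of the choice of the other sub-lattice generator. Combined with $|a + b \, i|^2 = S(a,b) = k$, this yields $|(a,b,-a,-b)| = 1$. The trivial unit vectors come from $S(a,b) = 1$, whose only integer solutions are $(\pm 1, 0)$ and $(0, \pm 1)$; this gives $4$ vectors of the form $(a, b, 0, 0)$ and $4$ of the form $(0, 0, c, d)$, totalling $8$.

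For the unboundedness, I would invoke Jacobi's two-square theorem: the count $r_2(k) = \#\{(a,b) \in \Z^2 : a^2 + b^2 = k\}$ equals $4(d_1(k) - d_3(k))$, where $d_j(k)$ denotes the number of divisors of $k$ congruent to $j \pmod 4$. Choosing $k = p_1 p_2 \cdots p_m$ to be a product of $m$ distinct primes each $\equiv 1 \pmod 4$ (which exist in abundance by Dirichlet's theorem on primes in arithmetic progressions), all $2^m$ divisors of $k$ are $\equiv 1 \pmod 4$, so $r_2(k) = 4 \cdot 2^m$, which is unbounded as $m \to \infty$.

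The proof is essentially routine and I do not anticipate any serious obstacle. Unlike the $L_k$ family, no exceptional values of $k$ need to be excluded, because $4k - 1 \equiv 3 \pmod 4$ can never be a perfect square, so $\omega_k$ is genuinely non-real for every positive integer $k$ and $M_k$ always has full rank $4$, as already remarked in the definition.
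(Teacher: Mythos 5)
Your proposal is correct and follows essentially the same route as the paper: the factorization $(a+b\,i)(1-\omega_k)$ with $\vert\,1-\omega_k\,\vert = 1/\sqrt{k}$, and unboundedness via the two-square divisor formula $4\left(d_{4,1}(k)-d_{4,3}(k)\right)$ applied to $k$ a product of $m$ distinct primes $\equiv 1 \pmod 4$. Your version is in fact slightly more complete, since you compute $\vert\,1-\omega_k\,\vert^2 = 1/k$ explicitly and justify the count of the $8$ trivial unit vectors, both of which the paper merely asserts.
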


\begin{proof}
$(a, b, -a, -b) = (a \cdot 1 + b \cdot \omega_{1/2}) \, (1 - \omega_k)$, because $\vert \, 1 - \omega_k \, \vert = 1 / \sqrt{k}$, \\ $\vert \, (a, b, -a, -b) \, \vert = 1$ and is a unit vector. \\
Unboundedness: see e.g. \cite{Hir2004} theorem 1, the number of solutions of $S (a, b) = k$ is denoted by $v (k)$ and $v (k) = 4 \, \left(d_{4,1} (k) - d_{4,3} (k)\right)$. Let $k = \prod_{i = 1}^{m} p_i$ be the product of $m$ primes with $p_i \equiv 1 \mod 4$. Then all $2^m$ divisors of $k$ are $\equiv 1 \mod 4$ and $u (k) = 4 \, 2^m$ is unbounded.
\end{proof}

More generally, with the prime decomposition of $k$, the primes $p_i \equiv 1 \mod 4, q_i \equiv 3 \mod 4$ and $v (k) \ne 0$:
\begin{equation}
   k = 2^t \, \prod_{i = 1}^{m} p_i^{e_i} \, \prod_{i = 1}^{n} q_i^{2f_i} \qquad
   v (k) = 4 \, \prod_{i = 1}^{m} (e_i + 1)
   \label{Gen_v_k}
\end{equation}
Note that $v (k) = 0$ when one of the exponents of the primes $q_i$ is odd. $v (k)$ is closely related to the factorization of $k$, not only in $\Z$, but in the ring of the Gaussian integers $\Z[\omega_{1/2}] = \Z[i]$. \\

One can find in N.J.A. Sloane's OEIS \href{https://oeis.org/A004018} {A004018} and \href{https://oeis.org/A004018/b004018.txt} {Table for n = 0..10000} the number of vectors in a shell of the square lattice.  
The lowest $k$ for a given number of solutions (divided by $4$) of $S (a, b) = k$ can be found in  \href{https://oeis.org/A018782} {A018782} and \href{https://oeis.org/A018782/b018782.txt} {Table for n = 0..1432}. The table \ref{Tab_M_k_lowest} is created using OEIS A018782.\\

\begin{table}[h]
  \caption{Lowest integer $k$ for a given number of unit vectors, obtained from
           \href{https://oeis.org/A018782/b018782.txt} {A018782 Table}, \\
           \# OEIS table entry = (\# unit vectors - 8) / 4}
  \begin{tabular}{c || c | c | c | c | c | c | c | c | c | c |}
  \hline
  \# unit vectors   & 12   & 16  & 20  &  24  &  28  & 32    &    36     &   40 & \dots &          200 \\
  \hline
  $k$               &  2   & 5   & 25  &  65  &  625 &  325  &  15\,625  &  1\,105 & \dots & 5\,928\,325 \\
  \hline
  \end{tabular}
  \label{Tab_M_k_lowest}
\end{table}

\subsection{Half integer subscripts for $M_k$}

Let $k$ be the half of an odd integer, i.e. $k = (2 m + 1) / 2$. In this case the conditions for non-trival unit vectors for the integers $a, b$ reads as $S (a, b) = a^2 + b^2 = (2 m + 1) /2$. There are no solutions. If we take the half of the four bases of $M_k$, naming the new lattice as $M_k / 2$ the
condition is $S (a / 2, b / 2) = (2 m + 1) / 2$ or $S (a, b) = 2 \, (2 m + 1)$. \\

\begin{definition}
Divide the 4 bases in $M_k$ by $2$ and name the lattice $M_k / 2$:
\begin{equation}
   M_k / 2 = \{a \cdot 1 / 2 + b \cdot \omega_{1/2} / 2
             + c \cdot \omega_k / 2 + d \cdot \omega_{1/2} \, \omega_k / 2  \, \vert \, a, b, c, d \in \Z  \}   
   \label{M_k_2}
\end{equation}
The values $2 \, k = 1, 5, 13, 25, 41, 61, 85, \dots$ have to be excluded. These are the values $(s^2 - 1) / 2$ for odd $s$. They have to be excluded, because in these cases  $4 \, k - 1 = s^2$ and the lattice is only 2-dimensional.
\end{definition}

Note: in the following table the second row shows $2 \, k$ instead of $k$ in the previous tables. The values in this row are the same as in table \ref{M_k_2} for $M_k$. This is due to the fact that $v (k)$ is multiplicative, see \ref{Gen_v_k}. Because $v (2) = 1$, $v (2 \, k) = v (k)$. Exceptions for the table entries are given in the footnotes.

\newpage

\begin{table}[h]
  \caption{Lowest half integer $k$ for a given number of unit vectors, obtained from
           \href{https://oeis.org/A018782/b018782.txt} {A018782 Table}, \\
           \# OEIS table entry = (\# unit vectors - 8) / 4}
  \begin{tabular}{c || c | c | c | c | c | c | c | c | c | c |}
  \hline
  \# unit vectors   & 12$^*$   & 16$^+$  & 20$^+$  &  24  &  28  & 32    &    36     &   40 & \dots &          200 \\
  \hline
  $2 \, k$          &  9       & 13      & 169     &  65  &  625 &  325  &  15\,625  &  1\,105 & \dots & 5\,928\,325 \\
  \hline
  \end{tabular}
  \label{Tab_M_k_2_lowest}
\end{table}
{\footnotesize $ ^*$ In the OEIS table the lowest $k$ is $2$. But here the $k$ can have only odd values.
So the next odd $k$ with the same shell size is taken.} \\
{\footnotesize $ ^+$ In the OEIS table the lowest $k$ for the \# unit vectors  $= 16, 20$, i.e. for the entries $2, 3$ in OEIS, are $5, 25$. But these $M_k / 2$ build only 2-dimensional lattices. So the next lowest $k$ with the same shell size is taken, that builds a 4-dimensional lattice.}

\begin{remark}
Maybe it was not neccesary, to treat the $M_k / 2$ as own family. Assume $M_{2k}$ has $(a, b, -a, -b)$ as a non-trivial unit vector. Let $a^* = (a + b) / 2, \; b^* = (a - b) / 2$  then $(a^*, b^*, -a^*, -b^*)$ is a non-trivial unit vector in $M_k / 2$, because $a^{*2} + b^{*2} = (a^2 + b^2) / 2$. The map $a \rightarrow a^*, \, b \rightarrow b^*$ is $1:1$, therefore $M_{2k}$ and $M_k$ have the same number of unit vectors. The tables \ref{Tab_M_k_lowest} and \ref{Tab_M_k_2_lowest} have the same values in the rows (but one row represents $k$ and the other $2k$), apart from the few exceptions explained there.
\end{remark}

\subsection{The exceptional lattice $M_1$  \label{Except_M_1}}

The lattice $M_1 = L_{1/2} = \{a \cdot 1 + b \cdot \omega_{1/2} + c \cdot \omega_1 + d \cdot \omega_{1/2} \, \omega_1 \, \vert \, a, b, c, d \in \Z  \}$ has square sub-lattices and hexagonal sub-lattices. There are only $12$ default unit vectors with $H (a, c) = H (b, d) = 1$. These $12$ unit vectors van also be seen as $8$ default unit vectors with $S (a, b) = S (c, d) = 1$ and $4$ other unit vectors $(a, b, -a, -b)$. These $12$ unit vectors are given by the formulae \ref{Gen_u_k} and \ref{Gen_v_k}. In contrary to the other lattices, this lattice is self-conjugated, i.e. for a lattice point $p \in M_1$ the complex conjugated $\bar{p} \in M_1$. \\

\section{Symmetry groups of the lattices $L_k$ and $M_k$}

The symmetry group, leaving invariant the length of vectors in the lattices, is the dihedral $D_{12}$, see \cite{EnHaSuVaZs} for hexagonal sub-lattices in the $M_k, k \ne 1/2$, the dihedral $D_{8}$ for square sub-lattices in the $L_k, k \ne 1$. \\

In the exceptional case $L_{1/2}$ or $M_1$ the symmetry group is $D_{24} \sim \Z_{12} \rtimes \Z_2$ of order $24$. The $\Z_2$ in the extension corresponds to the reflection $(a, d) (b, c)$, see \cite{EnHaSuVaZs}. The $\Z_2$ also corresponds to complex conjugation. The lattice consists of the integers in the quartic extension $x^4 - x^2 + 1$, defining the four primitive $12^{th}$ roots of unity. The Kleinian 4-group is the Galois group of this quartic polynomial with the $4$ field automorphisms $x \rightarrow x^1, x \rightarrow x^5, x \rightarrow x^7, x \rightarrow x^{11}$. Complex conjugation is the field automorphism $x \rightarrow x^{11} = x^{-1}$. \\

Geometrically the $D_{24}$ is generated by a primitive rotation of a regular 12-gon and a reflection on a line through the midpoints of $2$ opposite sides, which represents the reflection $(a, d) (b, c)$. Instead of this reflection another reflection on a line through $2$ opp0site vertices, the real axis could be taken as generator, representing complex conjugation.

\newpage

\bibliographystyle{amsplain}

\end{document}